\title{A L'hospital's rule for multivariable functions}
\author{Gary R. Lawlor}
\theoremstyle{plain}
\newtheorem{thm}{Theorem}[section]
\theoremstyle{definition}
\newtheorem{exm}[thm]{Example}
\def \RR {\mathbb{R}}     \def \RRR {\mathbb{R}^}         \def \xxx {{\bf x}} \def \vvv {{\bf v}}    \def \ppp {{\bf p}}            \def \mcN {{\mathcal{N}}}
\def \Lhos   {L'h\^{o}pital's }
\def \Lho {L'h\^{o}pital }
\begin{document}

\maketitle
\section{Introduction}

Zero divided by zero is arguably the most important concept in calculus, as it is the gateway to the world of differentiation, as well as (via the fundamental theorem of calculus) the calculation of integrals.  Organized methods of finding the right answers to zero-over-zero problems were developed by Newton and Leibniz and those who followed.  The concept of limit was finally made rigorous long afterward by Cauchy and others.  

We are all sternly informed in grade school that zero over zero is impossible and must be shunned.  A more accurate statement, however, (once we are ready to understand it) is that it is impossible to divide zero by zero \emph{out of context}. 

All other arithmetic operations are independent of context, a rather remarkable fact.  So six divided by three equals two, no matter whether you are sharing cookies with a couple of friends or making a fraction of a recipe of bread.  Not so with zero divided by zero: if you take a photograph of two cars and observe that in the picture both cars travel zero feet in zero seconds, that is no help in comparing the cars' velocities.

But if you can find some contextual information to a zero-over-zero problem and take a limit, you are back in business.  
As an example, if a problem traces back to the function 
$$f(x)=\frac{(3+x)^2-9}{x}$$
for $x$ near 0,
then we have at least four options for deciding what $f(x)$ ``ought to equal'' when $x=0$, namely plugging in nearby values and guessing, using algebra and cancellation, applying L'hospital's rule, or recognizing that the limit of $f$ as $x$ approaches zero represents the derivative of $x^2$ at $3$.

But what if the context of a zero-over-zero problem comes from a  function of several variables?  Is there a multivariable version of L'hospital's rule?  As simple a limit as
$$\lim_{{\scriptscriptstyle\begin{array}{cc}(x,y)\to(0,0) \\ x \ne y\end{array}}}\frac{x-y}{\sin x-\sin y}$$ can be rather puzzling to consider, while something like
$$\lim_{(x,y,z)\to(0,0,0)}\frac{7x^2yz^5+xy^3-3x^4yz}{x^8+x^2y^2z^4+(y-x^3+z^2)^2+z^6-xy^3z^5}$$
is daunting indeed!

A perusal of your multivariable calculus book will not likely help very much.  At best you'll find some interesting individual examples worked out, with a caution not to try such things at home --- at least not until you are properly warned against relying only upon individual lines through the origin!  You can verify, for example, that if we set $x=at$, $y=bt$ and $z=ct$, the above limit as $t\to 0$ always works out to be zero, regardless of the values of $a$, $b$ and $c$.  But that is certainly not the whole story, as we shall demonstrate at the end of the paper.

We seem largely to have overlooked the topic of finding an organized approach for resolving zero-over-zero limits of multivariable functions.

The two papers \cite{dobsic} and \cite{young}, p.~71, both handle the specific situation of a two-variable indeterminate limit resolvable by taking the mixed second derivative $\partial^2/\partial x\partial y$ of the numerator and denominator functions.  

The paper \cite{finekass} has a version using first-order derivatives, but the theorem's usefulness turns out to be limited, as we discuss after the proof of Theorem \ref{lhospuni} below.

There are some papers with a good treatment of the indeterminate limit of a quotient of a vector-valued function over a real-valued function, but these papers concern functions of a single variable.  See \cite{rosenholtz}, \cite{alb}, \cite{popa} and \cite{wazewski} for \Lho-style theorems of this type.
 
\vskip 0.1in
We now present a method for resolving many multivariable indeterminate limits.

\section{Nonisolated singular points}

A key aspect of a zero-over-zero limit of a multivariable function turns out to be the question of whether the singular point is isolated or not.  Perhaps surprisingly, the nonisolated case turns out to yield the nicer theorem.  

If the singular point $\ppp$ is not an isolated zero of the denominator function $g$, then we understand the limit to be taken over points where $g\ne 0$.  In order to hope for a limit to exist, the zero set of $g$ must be contained in the zero set of $f$ near $\ppp$; otherwise the function will blow up near zeroes of $g$ that are not zeroes of $f$.  This hypothesis about the zero sets does not appear explicitly in Theorem \ref{lhospuni}, but it is implied by the hypotheses that are there.

\begin{thm}[Multivariable \Lhos rule]  \label{lhospuni}
Let ${\mathcal{N}}$ be a neighborhood in $\RRR 2$ containing a point $\ppp$ at which two differentiable functions $f:\mcN\to\RR$ and $g:\mcN\to\RR$ are zero.

Set 
$$C=\{\xxx\in \mcN: f(\xxx)=g(\xxx)=0\},$$
and suppose that $C$ is a smooth curve through $\ppp$.

Suppose there exists a vector $\vvv$ not tangent to $C$ at $\ppp$ such that the directional derivative $D_\vvv g$ of $g$ in the direction of $\vvv$ is never zero within $\mcN$.

More generally, if $C$ consists of a union of two or more smooth curves through $\ppp$, suppose that for each component $E_i$ of $\mcN\setminus C$ we can find a vector $\vvv_i$, not tangent at $\ppp$ to any of the curves comprising $C$, such that $D_{\vvv_i} g \ne 0$ on $E_i$.

Then
$$\lim_{(x,y)\to\ppp}\frac{f(x,y)}{g(x,y)}=\lim_{\scriptscriptstyle{\begin{array}{c}(x,y)\to\ppp \\ (x,y)\in E_i \end{array}}}\frac{D_{\vvv_i} f(x,y)}{D_{\vvv_i} g(x,y)}$$
if the latter limits exist and are equal for all $i$.  Each limit is assumed to be taken over the domain of points where the denominator is nonzero, and we assume in each case that $\ppp$ is a limit point of that domain.

The same theorem holds for real-valued functions of $n$ variables, with $C$ a union of hypersurfaces.
\end{thm}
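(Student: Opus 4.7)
My plan is to reduce the multivariable limit to the classical single-variable Cauchy Mean Value Theorem by slicing $\mcN$ with lines parallel to $\vvv_i$. Transversality of $\vvv_i$ to $C$ guarantees that the line through a typical point $\qqq \in E_i$ near $\ppp$ meets $C$ at a nearby point $\qqq^*$ where both $f$ and $g$ vanish, and the hypothesis that $D_{\vvv_i} g$ is nonzero on $E_i$ is exactly what is needed in order to invoke Cauchy's MVT on the segment from $\qqq^*$ to $\qqq$.

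\textbf{Main construction.} Fix a component $E_i$ and let $\qqq \in E_i$ be sufficiently close to $\ppp$. Because $\vvv_i$ is not tangent at $\ppp$ to any branch of $C$, the implicit function theorem applied to local defining equations of $C$ produces, for $\qqq$ near $\ppp$, a small scalar $t^*$ (possibly negative) such that $\qqq^* := \qqq - t^* \vvv_i$ lies on $C$ and the open segment between $\qqq^*$ and $\qqq$ lies entirely in $E_i$. Define $F(t) = f(\qqq^* + t\vvv_i)$ and $G(t) = g(\qqq^* + t\vvv_i)$ for $t$ between $0$ and $t^*$. Then $F(0) = G(0) = 0$ since $\qqq^* \in C$, while $F'(t) = D_{\vvv_i} f$ and $G'(t) = D_{\vvv_i} g$ evaluated at the point $\qqq^* + t\vvv_i \in E_i$, and by hypothesis $G'$ is nonzero there. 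Cauchy's MVT then supplies an intermediate $t_0$ with
$$\frac{f(\qqq)}{g(\qqq)} \,=\, \frac{F(t^*)}{G(t^*)} \,=\, \frac{F'(t_0)}{G'(t_0)} \,=\, \frac{D_{\vvv_i} f(\qqq_0)}{D_{\vvv_i} g(\qqq_0)},$$
where $\qqq_0 := \qqq^* + t_0\vvv_i \in E_i$ sits strictly between $\qqq^*$ and $\qqq$.

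\textbf{Conclusion and main obstacle.} As $\qqq \to \ppp$ within $E_i$, the implicit-function construction forces $\qqq^* \to \ppp$, hence $\qqq_0 \to \ppp$ along $E_i$; so if the directional-derivative quotient has limit $L_i$ along $E_i$, we obtain $f(\qqq)/g(\qqq) \to L_i$. Equality of the $L_i$ over all components then gives the two-sided limit. The delicate technical point is establishing the existence of $\qqq^*$ \emph{together with} the fact that the open segment from $\qqq^*$ to $\qqq$ remains inside the single component $E_i$: without confinement to $E_i$ the hypothesis $D_{\vvv_i} g \ne 0$ could not be applied, and without transversality of $\vvv_i$ to every branch of $C$ the segment might graze along $C$ or cross into another component. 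The smoothness of $C$ and the transversality of $\vvv_i$ close this gap through the implicit function theorem, and the same argument carries over verbatim to $n$ variables once one observes that a smooth hypersurface can be locally straightened so that lines parallel to a transverse direction cross it cleanly in a single point.
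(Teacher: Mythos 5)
Your proof is correct and follows essentially the same route as the paper: slice along lines parallel to $\vvv_i$, use transversality of $\vvv_i$ to $C$ to locate a nearby point of $C$ on that line where $f$ and $g$ both vanish, apply the Cauchy mean value theorem on the resulting segment, and let the intermediate points converge to $\ppp$. If anything, you are more explicit than the paper about the technical point that the segment must remain inside the single component $E_i$ so that $D_{\vvv_i} g \ne 0$ applies along it.
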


\begin{proof}  Suppose first that $C$ is just a single smooth curve through $\ppp$.  Let $L$ be the line through $\ppp$ in the direction of $\vvv$.  
Since $\vvv$ is not tangent to $C$ at $\ppp$, parallel lines near $L$ also cross through $C$ near $\ppp$; see Figure 1.

\begin{figure}[h]
\label{transverse}
\includegraphics[scale=0.7]{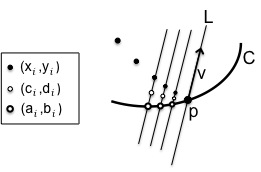}
\vskip -0.2in
\caption{Sequences of points approaching a singularity}
\end{figure}

Take a sequence $(x_i,y_i)$ of points converging to $\ppp$ with $g(x_i,y_i)\ne 0$ for all $i$.  Let $(a_i,b_i)$ be a corresponding sequence of points on $C$, also converging to $\ppp$, such that $(x_i,y_i)-(a_i,b_i)$ is a scalar multiple of $\vvv$ for all $i$.  Now for each $i$, as in the proof of the single-variable \Lhos rule, apply the Cauchy mean value theorem to the function $f/g$ restricted to the line in the direction of $\vvv$.  Recalling that $D_\vvv g\ne 0$ (it would be ironic to make a division-by-zero error in this proof!) we find that there is a point $(c_i,d_i)$ in between $(x_i,y_i)$ and $(a_i,b_i)$ such that
$$\frac{D_\vvv f(c_i,d_i)}{D_\vvv g(c_i,d_i)}=\frac{f(x_i,y_i)-f(a_i,b_i)}{g(x_i,y_i)-g(a_i,b_i)}=\frac{f(x_i,y_i)}{g(x_i,y_i)}.$$
But now since the points $(c_i,d_i)$ also converge to $\ppp$, the desired result follows.

Now if $C$ consists of more than one curve through $\ppp$ then the same proof can be applied, with the domain restricted to any component $E_i$.  The result is that the limit of $f/g$ with domain $E_i$ obeys the desired formula; since this is true (and gives the same value) for all of the finitely many components $E_i$, then the overall limit obeys the same formula.
\vskip 0.05in

The proof for $\RRR n$ goes through unchanged.
\end{proof}
 
\vskip 0.01in
We note here the importance of taking the directional derivative with respect to a fixed direction $\vvv$.  This works in the proof above because all lines passing near $\ppp$ in the direction of $\vvv$ do pass through the common zero set of $f$ and $g$.  A different strategy might be to take the directional derivative always in the direction straight out from a singular point.  Indeed, the paper \cite{finekass}, which handles only isolated singular points, does precisely this.  The result is a true theorem, with proof similar to that above, but rarely is the expression obtained after differentiating any simpler than the original; the degree of a rational function, for example, does not decrease.

\vskip 0.1in

\begin{exm} \label{sinxsiny}
Let
$$h(x,y)=\frac{f(x,y)}{g(x,y)}=\frac {x-y}{\sin x-\sin y}.$$
Then near the origin the zero sets of both numerator and denominator consist of the line $y=x$.  Choose, say, $\vvv=(1,0)$, so that the directional derivative to be taken is just the partial derivative by $x$.  Then
$$\lim_{{\scriptscriptstyle\begin{array}{cc}(x,y)\to(0,0) \\ x \ne y\end{array}}}\frac{x-y}{\sin x-\sin y}=\lim\frac{f_x}{g_x}=\lim\frac{1}{\cos x}=1.$$
\end{exm}

\begin{exm} \label{threevars}
Let
$$h(x,y,z)=\frac{f(x,y,z)}{g(x,y,z)}=\frac {\sin z-\sin(x^2+y^2)}{\tan(z-x^2)-\tan(y^2)}.$$
Then near the origin the zero sets of both numerator and denominator consist of the paraboloid $z=x^2+y^2$.  Choose, $\vvv=(0,0,1)$, so that the directional derivative to be taken is just the partial derivative by $z$.  The hypotheses of Theorem \ref{lhospuni} are satisfied, and
$$\lim_{\scriptstyle(x,y,z)\to(0,0,0)}\frac {\sin z-\sin(x^2+y^2)}{\tan(z-x^2)-\tan(y^2)}=\lim_{\scriptstyle(x,y,z)\to(0,0,0)}\frac{\cos z}{\sec^2(z-x^2)}=1.$$
\end{exm}

\begin{exm} Let
$$h(x,y)=\frac{f(x,y)}{g(x,y)}=\frac {x^2-y^2}{\cos x-\cos y}.$$
Near the origin, numerator and denominator are zero when $y=\pm x$.  Choosing $\vvv_1=(1,0)$, we note that $D_{\vvv_1} g=-\sin x$ is zero on the $y$ axis, which intersects neither the open ``east component'' $E_1$ of the plane bounded by the northeast and southeast arms of the zero set of $g$, nor the opposite ``west component.''  We calculate
$$\lim_{(x,y)\to(0,0)}h(x,y)=\lim _{(x,y)\to(0,0)}-\frac{2x}{\sin x}=-2$$
for $(x,y)$ restricted to $E_1$.  

For the north and the south components we use $\vvv_2=(0,1)$ and obtain the same limit, so the overall limit is $-2$.
\end{exm}


\section{Isolated singular points} \label{isolated}

Now we move to the case where the denominator function $g$ is (or approaches) zero at an isolated point $\ppp$.  We prescribe the following algorithm, not always conclusive, for deciding the value of such a limit.  The basic idea is to use a change of variables that makes the denominator into a sum of squares.

\vskip 0.1in \noindent
{\bf Algorithm:} To attempt to resolve a limit in $\RRR n$ of the form
$$\lim_{\xxx\to\ppp}\frac{f(\xxx)}{g(\xxx)}$$
where $g(\xxx)\ne 0$ on a deleted neighborhood of $\ppp$ but $g(\xxx)$ equals (or approaches) 0 at the point $\ppp$:
\begin{enumerate}
\item[Step 1:] Make a simple, preliminary substitution, such as holding all but one variable constant, and find the resulting restricted limit at $\ppp$.  

\begin{itemize}
\item If the restricted limit equals 0, proceed to Step 2.

\item If the restricted limit does not exist, then of course the unrestricted limit does not exist, either.  

\item If the restricted limit exists and equals $\ell\ne 0$, replace the numerator $f(\xxx)$ by $f(\xxx)-\ell g(\xxx)$, so that the restricted limit will now equal zero.  

\vskip 0.1in Thus, the two remaining possibilities are that the (unrestricted) limit is zero or undefined.  
\end{itemize}

\item[Step 2:] See whether a separation technique such as in Example \ref{exone} below resolves the limit.  If not, proceed to Step 3.

\item[Step 3:] Write the denominator $g(\xxx)$ as a sum of squares $u_1(\xxx)^2+\cdots+u_n(\xxx)^2$ plus (optionally) another function $v(\xxx)$.  Each $u_i(\xxx)$ must approach zero as $\xxx\to\ppp$.  

The function $v$, if present, will need to be less significant near $\ppp$ than the sum of $u_i(\xxx)^2$, as described further below.

\item[Step 4:] To prove the limit does not exist, try the substitution $u_i(\xxx)=m_i t$ for all $i$.  If this defines, for each appropriate choice of constants $\{m_i\}$, a curve through $\ppp$, and if the resulting restricted limit does not exist or depends on the values of $\{m_i\}$, then the original limit does not exist.  

\item[Step 5:] To prove the limit is zero, make a polar or spherical coordinate substitution $u_i(\xxx)=h_i(\theta_1,\ldots,\theta_{n-1})\rho$, where the sum of squares of the functions $h_i$ is identically 1.  If the numerator $f(\xxx)$ and the residual denominator term $v(\xxx)$ can both be written as linear combinations of $\rho^{\alpha_j}$ times bounded functions, with the powers $\alpha_j$ all strictly greater than 2, then we can divide top and bottom by $\rho^2$, leaving a positive power of $\rho$ times a bounded function.  Since $\rho\to 0$ as $\xxx\to {\bf 0}$, the overall limit is zero.
\end{enumerate}

\vskip 0.2in
\begin{exm} \label{exone}
We begin with an example that can be resolved by separating variables:
$$\lim_{(x,y)\to (0,0)} \,\,\frac{x^2+\sin^4 y}{\sin^2 x+y^4}.$$
Setting, for example, $y=0$ and applying the single-variable L'hospital's rule we get a restricted limit of 1.  Thus, according to Step 1 we should try to prove that the following new limit equals zero:
$$\lim_{(x,y)\to (0,0)} \,\,\frac{x^2+\sin^4 y-\sin^2 x-y^4}{\sin^2 x+y^4}.$$
Now this equals
$$\lim_{(x,y)\to (0,0)} \,\,\frac{x^2 - \sin^2 x }{\sin^2 x+y^4} + \lim_{(x,y)\to (0,0)} \,\,\frac{ \sin^4 y -y^4}{\sin^2 x+y^4},$$
which is bounded in absolute value by
$$\lim_{(x,y)\to (0,0)} \,\,\frac{|x^2 - \sin^2 x| }{\sin^2 x} + \lim_{(x,y)\to (0,0)} \,\,\frac{ |-\sin^4 y +y^4|}{y^4}.$$
L'hospital's rule shows each of these limits to be zero, as desired.
\end{exm}
\begin{exm}
We continue with a simple example, standard in textbooks:
$$\frac{f(x,y)}{g(x,y)}=\frac{xy}{x^2+y^2}.$$
The singularity at the origin is isolated, and the preliminary substitution $y=0$ gives a restricted limit of 0.

The denominator is already a sum of squares of polynomials, with $u_1(x,y)=x$ and $u_2(x,y)=y$.  As prescribed in step 3 of the algorithm, we set
$$x=m_1 t \mbox{ and }y=m_2 t,$$
resulting in the limit
$$\lim_{t\to 0} \,\,\frac{m_1m_2 t^2}{m_1^2t^2+m_2^2t^2}=\frac{m_1m_2}{m_1^2+m_2^2};$$
since this limit depends on $m_1$ and $m_2$, the original limit does not exist.
\end{exm}

The point commonly made in connection with the above function is that although you can prove
that a limit \emph{does not exist} by restricting to \emph{straight lines} through the origin in the domain, 
this method does not suffice to prove that a limit \emph{does exist}. 
A typical example cited is the following, which we will analyze first by lines through the origin and then through the algorithm above.

\begin{exm} Consider
$$\lim_{(x,y)\to(0,0)}\frac{x^2 y}{x^4+y^2}.$$

\begin{figure}[h]
\label{hithere}
\includegraphics[scale=0.82]{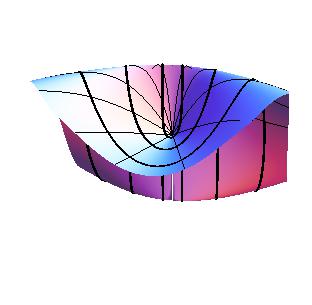}
\vskip -0.9in 
\caption{$z=x^2y/(x^4+y^2)$}
\end{figure}

If we were to set $x=r\cos\theta$ and $y=r\sin\theta$, we would get
$$\frac{r^3\cos^2\theta\sin\theta}{r^4\cos^4\theta+r^2\sin^2\theta}=\frac{r\cos^2\theta\sin\theta}{r^2\cos^4\theta+\sin^2\theta}$$
when $r\ne 0$.  Holding $\theta$ constant and taking the limit as $r\to 0$ we always get a (restricted) limit of 0.  This is reflected by the lighter curves in the figure (on which $\theta$ is constant) all converging to one center point. 

We might think, then, that the overall limit of $g$ is zero.  As the textbooks point out, this is not the case, as shown by the substitution $y=x^2$.  Our algorithm prescribes essentially the same substitution; with
$$u_1(x,y)=x^2 \quad \mbox{ and } \quad u_2(x,y)=y,$$
we set
$$x^2=m_1 t \quad \mbox{ and }\quad y= m_2 t.$$
The limit as $t\to 0$ of the restricted function is now
$$\frac{m_1 m_2}{m_1^2+m_2^2}.$$
This is different for different values of $m$, so the original limit does not exist.  This is reflected by the heavier curves in the figure (on which $r$ is constant) having to drape up over the hump.
\end{exm}
\vskip 0.1in

\begin{exm}  \label{rational} Let
$$h(x,y)=\frac{x^3y^3}{x^6+y^4}.$$

We set
$$x^3=m_1 t \quad\mbox{ and }\quad y^2=m_2 t.$$
Then
$$h=\frac{m_1m_2^{3/2}t^{5/2}}{(m_1^2+m_2^2)t^2}=\frac{m_1m_2^{3/2}}{(m_1^2+m_2^2)}t^{1/2},$$
which goes to zero at $t\to 0$.  Suspecting that the limit is zero, we move to step 4.

Set $x^3=r\cos\theta$ and $y^2=r\sin\theta$.  Then
$$|h|=\frac{r^{5/2}|\cos\theta||\sin\theta|^{3/2}}{r^2}\le r^{1/2},$$
so that the limit is zero.

\end{exm}
{\bf Note:} The substitution above does cover all values of $x$ and $y$, whether positive or negative.  The substitution $y^2=r\sin\theta$ is not one-to-one; it has to use the same values of $\theta \in [0,\pi]$ for positive and negative values of $y$.  But since the polar limit is zero and since $\|(x,y)\|$ small implies $r$ is small, the original limit is also zero.

\vskip 0.1in
A fancier example features more than two square terms in the denominator.
\begin{exm}
Let
$$h(x,y)=\frac{f(x,y)}{g(x,y)}=\frac{x^3 y^2}{x^6+x^2y^2+y^6}.$$

\begin{figure}[h]
\label{graph2}
\includegraphics[scale=0.62]{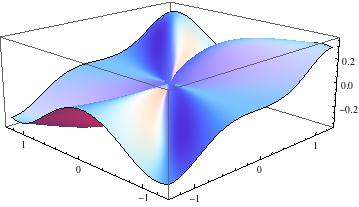}
\caption{$z=x^3y^2/(x^6+x^2y^2+y^6)$}
\end{figure}

The algorithm asks for just two square terms from the denominator.  We try different pairs.

First let us set
$$u_1(x,y)=x^3, \quad u_2(x,y)=y^3, \quad\mbox{ and }\quad v(x,y)=x^2y^2.$$
Then (setting the $m_i$'s equal to 1 for quicker analysis) we make the substitution
$$x^3= t= y^3$$
and obtain
$$h=\frac{t^{5/3}}{ t^2+t^{4/3}+t^2}=\frac{t^{1/3}}{1+2t^{2/3}},$$
which goes to zero.  But if we try to verify the zero limit by setting $x^3=r\cos\theta$ and $y^3=r\sin\theta$ we get
$$\frac{r^5\cos^3\theta\sin^2\theta}{r^2+r^{4/3}(\cos\theta\sin\theta)^{2/3}}    ,$$
which is inconclusive.  Note that with this substitution, the power of $t$ coming from the side term $v(x,y)$ was smaller than 2.

Next we try 
$$u_1(x,y)=x^3,\quad u_2(x,y)=xy,\quad \mbox{ and }\quad v(x,y)=y^6.$$
The substitution
$$x^3=m_1 t \quad\mbox{ and }\quad xy = m_2 t$$
gives
$$h=\frac{(x)(xy)^2}{x^6+x^2y^2+y^6}\le \frac{(x)(xy)^2}{x^6+x^2y^2}$$
$$= \frac{m_1^{1/3}t^{1/3}m_2^2t^2}{m_1^2t^2+m_2^2t^2},$$ which is bounded by a constant times a positive power of $t$, and so goes to zero.  We verify this zero limit by setting
$$x^3=r\cos\theta,\quad xy=r\sin\theta$$
$$|h|=\frac{r^{7/3}|\cos^{1/3}\theta\sin^2\theta|}{r^2+y^6}\le\frac{r^{7/3}}{r^2}=r^{1/3}.$$
The limit of $r^{1/3}$ is zero, so that the original function has a zero limit at the origin.

\end{exm}

One final example illustrates how we may replace functions by Taylor polynomial approximations, and also use dominant square terms to eliminate non-dominant terms.
\begin{exm} \label{great}
Let
$$h(x,y)=\frac{2-2\cos(x^2y^2)}{x^{10}+x^6y^2+y^6-x^9\sin  y}.$$

\begin{figure}[h]
\label{graph3}
\includegraphics[scale=0.72]{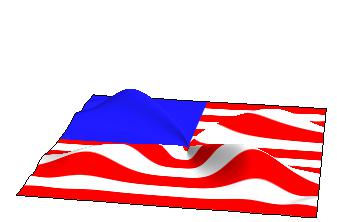}
\caption{The function of Example \ref{great}}
\end{figure}

Define
\begin{align*}g_1(x,y) &= x^{10}+x^6y^2+y^6 \\ g_2(x,y) &=-x^9\sin  y \\
\tilde{g_2}(x,y) &= -x^9y \\
 f(x,y) &=2-2\cos(x^2y^2)\\ \tilde f(x,y) &=x^4y^4. \end{align*}
Now by setting $u=x^2y^2$ and using the single-variable \Lhos rule we see that
$$\lim_{(x,y)\to (0,0)}\frac{\tilde f(x,y)}{f(x,y)}=1;$$
similarly we verify
$$\lim_{(x,y)\to (0,0)}\frac{\tilde {g_2}(x,y)}{g_2(x,y)}=1.$$
Then if we show that the limits of $\tilde{g_2}/g_1$ and $\tilde{f}/g_1$ are zero we will also know that the limit of $g_2/g_1$ is zero, and the limit of $(g_1+g_2)/g_1$ is 1; thus
$$\lim h=\lim \frac{f}{g_1+g_2}=\lim\frac{f}{g_1+g_2}\,\frac{g_1+g_2}{g_1}\,\frac{\tilde{f}}{f}=\lim\frac{\tilde{f}}{g_1}=0.$$
\end{exm}
To analyze $\tilde{g_2}/g_1$, set 
$$x^3 y=r\cos\theta \mbox{ and }x^5=r\sin\theta.$$
Then
$$|\frac{\tilde{g_2}(x,y)}{g_1(x,y)} | = \frac{r |\cos\theta| \,r^{6/5}|\sin\theta|^{6/5}}{r^2+y^6}$$
$$\le \frac{r^{11/5}}{r^2}=r^{1/5},$$
which goes to zero as $r\to 0$.  Thus, 
$$\lim_{(x,y)\to(0,0)}\frac{\tilde{g_2}(x,y)}{g_1(x,y)}=0.$$

To analyze $\tilde{f}/g_1$, 
this time we need to highlight a different pair of square terms from the denominator; set
$$x^3 y=r\cos\theta \mbox{ and }y^3=r\sin\theta.$$
Then
$$|\frac{\tilde{f}(x,y)}{g_1(x,y)}|=\frac{r^{4/3}|\cos\theta|^{4/3}\,r^{8/9}|\sin\theta|^{8/9}}{r^2+x^{10}}$$
$$\le \frac{r^{20/9}}{r^2}=r^{2/9}\to 0.$$

One technicality is that some of the limits being combined here, such as the limit of $\tilde{f}/f$, are undefined when $x=0$ or $y=0$.  For this, note that if the overall limit of $h$ is zero when restricted to each of \emph{finitely many} domains then the limit over the union of those domains is also zero.  Clearly the limit of $h$ is zero when restricted to $x=0$ or to $y=0$, so the analysis is complete.

\section{Unfinished business}
We promised in the introduction to complete the analysis of the limit
$$\lim_{(x,y,z)\to(0,0,0)}\frac{7x^2yz^5+xy^3-3x^4yz}{x^8+x^2y^2z^4+(y-x^3+z^2)^2+z^6-xy^3z^5}.$$
We noted there that all restricted limits along lines through the origin are zero.  To analyze further, set 
\begin{align*} u_1 &= x^4\\ u_2 &= xyz^2\\ u_3&= y-x^3+z^2 \\ u_4 &=z^3\end{align*}
we obtain
$$\lim\frac{7u_1^{1/4}u_2u_4+u_1^{1/4}(u_3+u_1^{3/4}-u_4^{2/3})^3-3u_1(u_3+u_1^{3/4}-u_4^{2/3})u_4^{1/3}}{u_1^2+u_2^2+u_3^2+u_4^2-u_2u_4(u_3+u_1^{3/4}-u_4^{2/3})^2}.$$
Now the non-square denominator term and all numerator terms but one are of degree greater than 2.  But the last numerator term is of degree 2.  

The substitution in Step 4 of the algorithm of section \ref{isolated} would result in an overdetermined system.  Since the term of degree 2 in the numerator only involves $u_1$, $u_3$ and $u_4$, we leave out $u_2$ in determining $t$.  Also, to eliminate fractional exponents we will use $t^{12}$ in place of $t$.  Choosing $m_1=m_3=m_4=1$ we set
$t^{12}=u_1=u_3=u_4$ and solve for $x$, $y$, and $z$ to obtain
\begin{align*}x&=t^3\\ z&=t^4 \\ y &= t^{12}+ t^9-t^8.   \end{align*}
Then the original limit translates to
$$\frac{3t^{24}+O(t^{25})}{3t^{24}+O(t^{25})},$$
whose limit is $1$.  So the original function is bounded in a neighborhood of $(0,0,0)$ (since no numerator term has degree less than 2), and it turns out that the function goes to zero except on a tiny thread through the origin.

\end{document}